\newtheorem{theorem}{Theorem}[section]
\newtheorem{lemma}[theorem]{Lemma}
\theoremstyle{definition}
\newtheorem{definition}[theorem]{Definition}
\newtheorem{example}[theorem]{Example}
\theoremstyle{remark}
\newtheorem{remark}[theorem]{Remark}
\theoremstyle{coro}
\newtheorem{coro}[theorem]{Corollary}
\newtheorem{prop}[theorem]{Proposition}
\numberwithin{equation}{section}
\let\oldqed\qed
\renewcommand\qed{\oldqed\par\bigskip}
\newcommand\ZZ{{\mathbb{Z}}}
\newcommand\NN{{\mathbb{N}}}
\def\Ker{{\mathrm {Ker}}}
\def\Hom{{\mathrm {Hom}}}
\def\Tor{{\mathrm {Tor}}}
\def\gldim{{\mathrm {gldim}}}
\def\hhdim{{\mathrm {hhdim}}}
\begin{document}

\title{Two classes of algebras with infinite Hochschild homology }

\author{Andrea Solotar}
\address{Departamento de Matem\'atica, Facultad de Ciencias Exactas y Naturales,
 Universidad de Buenos Aires, Ciudad Universitaria, Pabell\'on 1
1428, Buenos Aires, Argentina}
\email{asolotar@dm.uba.ar}

\author{Micheline Vigu\'e-Poirrier}
\address{Laboratoire Analyse, G\'eom\'etrie \& Applications, UMR CNRS 7539,
Institut Galil\'ee, Universit\'{e}  Paris 13
F-93430 Villetaneuse, France}
\email{vigue@math.univ-paris13.fr}
\thanks{This work has been supported by the projects UBACYTX212 and PIP-CONICET 5099.
The first author is a research member of CONICET (Argentina) and a
Regular Associate of ICTP Associate Scheme. The second author is a research member of University of Paris 13,
 CNRS, UMR 7539 (LAGA)}

\subjclass[2000]{Primary 16E40, 16W50}

\date{June 8, 2009.}

\keywords{global dimension, Hochschild homology theory}

\begin{abstract}
We prove without any assumption on the ground field that higher Hochschild homology groups do not vanish for two large classes of algebras
whose global dimension is not finite.
\end{abstract}

\maketitle

\section{Introduction}

Let $k$ be a fixed field. All the algebras we consider are associative unital $k$-algebras. We will denote $\otimes = \otimes_k$.

It is well known that the homological properties of an algebra are related to the properties of its Hochschild (co)homology groups. 
For example, if a finite dimensional algebra over an algebraically closed field has finite global dimension, then all its higher 
Hochschild cohomology groups vanish.
In \cite{Hap}, D. Happel conjectured that the converse would be true. However, it has been shown in \cite{BGMS}
that the conjecture does not hold for algebras of type $A_q=k\langle x,y \rangle /(x^2, y^2, xy-qyx)$, where $q\in k$.

In \cite{Han}, Han proved that the total Hochschild homology of the algebras $A_q$ is infinite dimensional.
This fact led him to suggest the following conjecture:  

{\em {Conjecture}(Han): Let $A$ be a finite dimensional $k$-algebra. If the total Hochschild homology of $A$ is finite dimensional,
then $A$ has finite global dimension.}
 
In the same paper, Han provided a proof of this statement for monomial finite dimensional algebras.

Avramov and Vigu\'e's computations in \cite{AV} show that Han's conjecture holds in the commutative case not only for  finite dimensional algebras 
but for essentially finitely generated ones, see also \cite{Vi2}.

In \cite{BM}, Han's conjecture is proved for graded local algebras, Koszul algebras and graded cellular algebras,
provided the characteristic of the ground field is zero.
The proof relies on the properties of the graded Cartan matrix and the logarithm and strongly uses the hypothesis on the characteristic
of the field.

In \cite{BE}, the authors compute the Hochschild homology groups of quantum complete intersections, that is algebras of type
$A=k\langle x,y \rangle /(x^a, y^b, xy-qyx)$, where $q\in k^*$ is not a root of unity and $a,b\ge 2$ are fixed integers.
In particular they prove Han's conjecture for this class of finite dimensional algebras.

The main purpose of this paper is to prove that higher Hochschild homology groups do not vanish for two large classes of algebras
whose global dimension is not finite, without any assumption on the ground field.

In Theorem I, the algebras we consider are generalizations of quantum complete intersections and they are not assumed to be finite dimensional.

On the other hand, the algebras satisfying the hypotheses of Theorem II are, in some sense, opposite to quantum complete intersections, 
since we assume that they have two generators $x$ and $y$ such that $xy=yx=0$.

Now we state both main theorems. 

{\em {\bf Theorem I:} Let $A=k\langle x_1,\dots, x_n \rangle /(f_1,\dots, f_p)$ be a finitely generated $k$-algebra, 
such that $f_1$ belongs to $k[ x_1 ]$ and, for $i\ge 2$, $f_i$ belongs to the two-sided ideal $(x_2, \dots, x_n)$.
If $B= k[ x_1]/(f_1)$ is not smooth, then the Hochschild homology groups $HH_n(A)$ are not zero for an infinite 
increasing sequence of integers.}

For example Theorem I is valid if $f_1=x_1^2g_1$, with $g_1 \in k[x_1]$ and $f_2, \dots, f_p$ satisfying the hypothesis of the theorem.

\smallskip

{\em {\bf Theorem II:} Let $A= \bigoplus_{n\ge 0}A^n$ be a finite dimensional graded $k$-algebra with $A^0=k$ and such that  
$\overline{A}= \bigoplus_{n\ge 1}A^n$ is not zero.
Assume that there exist two generators $x$ and $y$ of the algebra $A$ verifying $xy=yx=0$.
Then the total Hochschild homology of $A$ is not finite dimensional.}

\begin{remark}
This theorem is valid for very large classes of graded local algebras since relations between the other generators 
play no role.
\end{remark} 

The proof of Theorem I follows without any computation from the well known result for commutative algebras.

The methods used in the proof of Theorem II rely on differential homological algebra.
In fact, we will work with the cobar construction on the graded coalgebra $\bigoplus_{n\ge 0}\Hom_k(A^n,k)$.  
We denote it $(\Omega^*A, d)$.
The Hochschild homology groups of the differential graded algebra $(\Omega^*A, d)$ are dual, as vector spaces, to the Hochschild
homology groups of the graded $k$-algebra $A$.
Since $(\Omega^*A, d)$ is a tensor algebra, a short complex is available to compute
its Hochschild homology.

\smallskip

The paper is organized as follows:
\begin{enumerate}
\item Introduction.
\item Proof of Theorem I.
\item Interpretation in terms of quivers.
\item Hochschild homology in the differential graded case.
\item Proof of Theorem II.
\end{enumerate}

\section{Proof of Theorem I}

Let $A$ be an associative unital $k$-algebra. The definition of the Hochschild homology groups, 
$HH_n(A)$, $n\ge 0$ is well known (see for example \cite{Lo}). We have 
\[ HH_n(A):=\Tor_n^{A^e}(A,A)= H_n(C_*(A), b) \]
where  $(C_*(A), b)$ is the Hochschild complex of $A$. 
Clearly, $HH_n(A)$ is a $k$-vector space for all $n\ge 0$.

In this section we assume that $A=k\langle x_1,\dots, x_n \rangle /(f_1,\dots, f_p)$
where $n,p \ge 1$, $f_1$, which we may suppose monic, belongs to $k[x_1]$ and, for $i\ge 2$, $f_i$ 
belongs to the two-sided ideal $(x_2, \dots, x_n)$.
Let us consider the $k$-algebra $B= k[x_1]/(f_1)$ and the maps
\[ \iota: B\to A \hbox{     with } \iota(x_1)=x_1, \]
\[ \pi: A\to B \hbox{     with } \pi(x_1)=x_1, \pi(x_i)=0, \hbox{ for } i\ge 2. \]

The following lemma is easy to prove.
\begin{lemma}
The maps $\iota$ and $\pi$ are morphisms of $k$-algebras and satisfy $\pi\circ\iota=id_B$.
\end{lemma}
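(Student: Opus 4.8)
The statement to prove is that $\iota\colon B\to A$ and $\pi\colon A\to B$ are $k$-algebra morphisms with $\pi\circ\iota=\id_B$. The plan is to check well-definedness of each map via the universal properties of the polynomial algebra $k[x_1]$ and the free algebra $k\langle x_1,\dots,x_n\rangle$, and then compute the composite on the single generator $x_1$.

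First I would handle $\iota$. The assignment $x_1\mapsto x_1\in A$ extends uniquely to a $k$-algebra morphism $\widetilde\iota\colon k[x_1]\to A$ since $k[x_1]$ is the free commutative $k$-algebra on one generator and $x_1$ generates a commutative subalgebra of $A$ (a single element always commutes with itself). To see that $\widetilde\iota$ descends to $B=k[x_1]/(f_1)$, I must check $\widetilde\iota(f_1)=0$ in $A$; but $f_1\in k[x_1]$ is precisely one of the defining relations of $A$, so $f_1=0$ already holds in $A=k\langle x_1,\dots,x_n\rangle/(f_1,\dots,f_p)$. Hence $\iota$ is a well-defined $k$-algebra morphism with $\iota(x_1)=x_1$.

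Next I would handle $\pi$. The assignment $x_1\mapsto x_1\in B$, $x_i\mapsto 0$ for $i\ge 2$, extends uniquely to a $k$-algebra morphism $\widetilde\pi\colon k\langle x_1,\dots,x_n\rangle\to B$ by the universal property of the free associative algebra. To see it descends to $A$, I must check $\widetilde\pi(f_j)=0$ in $B$ for each $j$. For $j=1$: $f_1\in k[x_1]$, so $\widetilde\pi(f_1)=f_1(x_1)=0$ in $B=k[x_1]/(f_1)$. For $j\ge 2$: $f_j$ lies in the two-sided ideal $(x_2,\dots,x_n)\subseteq k\langle x_1,\dots,x_n\rangle$, i.e.\ $f_j$ is a sum of terms each containing some $x_i$ with $i\ge 2$ as a factor; since $\widetilde\pi$ is multiplicative and kills every such $x_i$, each term maps to $0$, so $\widetilde\pi(f_j)=0$. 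Thus $\pi$ is a well-defined $k$-algebra morphism.

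Finally, $\pi\circ\iota\colon B\to B$ is a $k$-algebra morphism sending the generator $x_1$ to $\pi(\iota(x_1))=\pi(x_1)=x_1$; since $B$ is generated as a $k$-algebra by $x_1$, any two algebra morphisms $B\to B$ agreeing on $x_1$ coincide, so $\pi\circ\iota=\id_B$. There is no real obstacle here; the only point requiring a moment's care is the verification that the defining relations $f_j$ ($j\ge 2$) lie in the kernel of $\widetilde\pi$, which follows immediately from the hypothesis $f_j\in(x_2,\dots,x_n)$.
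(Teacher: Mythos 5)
Your proof is correct and follows the standard route: the paper itself omits the argument entirely (stating only that the lemma ``is easy to prove''), and your verification via the universal properties of $k[x_1]$ and $k\langle x_1,\dots,x_n\rangle$, together with the observation that $f_1$ and the $f_j\in(x_2,\dots,x_n)$ die under the respective maps, is exactly the intended check. Nothing is missing.
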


Now, Theorem I is an immediate consequence of the following facts:
\begin{itemize}
\item the morphisms $\iota$ and $\pi$ induce by functoriality $k$-linear maps  
\[HH_*(\iota): HH_*(B) \to HH_*(A) \hbox{ and } HH_*(\pi): HH_*(A) \to HH_*(B)\] 
satisfying
$HH_*(\pi)\circ HH_*(\iota)=id_{HH_*(B)}$,
\item using a result of \cite{AV}, $HH_n(B)$ is non zero  for an infinite sequence of integers $n$.
\end{itemize}

Another proof can be given using the computations for  $HH_n(B)$ in \cite{BACH}:
if $f_1$ and $f_1'$ are not coprime, then $HH_n(B)\neq 0$ for all $n\in \NN$.

\begin{example}
If $f_1=x_1^a$, with $a\ge 2$, and $f_i \in (x_2, \dots, x_n)$, then Theorem I holds.
This covers the case of quantum complete intersections.  
\end{example}

An interesting question is to know if the algebras $A$ considered in Theorem I have infinite global dimension.
In the commutative case, it is well known that this is true.
Also, if $A=k\langle x_1,\dots, x_n \rangle /(f_1,\dots, f_p)$ is a finite dimensional $k$-vector space, Happel's result \cite{Hap} implies that
$\gldim(A)=\infty$, where $\gldim$ denotes the global dimension of the algebra. 

It follows from Serre's theorem in page 37 of \cite{Se} that if $B$ is not smooth, then its global dimension is not finite.
In the general case, we cannot ensure that if we have $k$-algebras $A$ and $B$ as above with $\gldim(B)=\infty$, then $\gldim(A)=\infty$.

However, we can use the algebra map $\iota:B\to A$  to obtain that the
global dimension of $A$ is not finite in some cases:
Suppose that $\iota$ endows $A$ with a structure of  flat $B$-module. In this situation, Corollary 4.4 of
\cite{Ba} says that $\gldim(A)=\infty$. This is the case, for example, of quantum complete intersections.

\section{Interpretation in terms of quivers}

Let $A$ be a finite dimensional basic $k$-algebra, then there exist a quiver $Q^A$ and an admissible ideal $I^A$ such  that $A$ is isomorphic to 
$kQ^A/I^A$.
In other words, if we denote by $Q_0^A=\{e_1, \dots, e_r \}$ the set of vertices of $Q^A$ and by $Q_1^A$ its set of 
arrows, then $kQ_0^A$ is an
algebra, $kQ_1^A$ is a $kQ_0^A$ two-sided ideal and  $A=T_{kQ_0^A}kQ_1^A/I^A$, where $I^A\subseteq(kQ_1^A)^2$.

Suppose that there exist $e_i\in kQ_0^A$ and $x\in e_i(kQ_1^A)e_i$. In fact, since $A$ is finite dimensional and $I^A$ is admissible, 
if such a loop $x$ exists then $x^n=0$ for some integer $n {\ge 2}$.

Let $B$ be the $k$-algebra $k[x]/\langle x^n \rangle$, then $B=T_{kQ_0^B}kQ_1^B/I^B$, where $Q_0^B=\{ e_i\}$, $Q_1^B=\{ x\}$ and 
$I^B=\langle x^n \rangle$.

We may  consider the morphisms of algebras of the previous section.
In this case the map $\iota$ is completely determined by its values on $e_i$ and $x$. 
It sends $e_i$ to $e_1 + \dots + e_r$ and $x$ to $x$. Clearly, it is well defined.

On the other hand, the morphism $\pi:A\to B$ is given as follows, $\pi(e_j)=\delta_{ij}e_i$, 
for $1\le j\le r$, and the restriction of $\pi$ to the arrows of $A$ is given by $\pi(y)=\delta_{yx}x$, where $\delta$ is the Kronecker 
delta.
If we assume that $I^A=\langle x^n, f_2 \dots, f_s \rangle$ is admissible and that $f_i$ belongs to the two-sided ideal
generated by $Q_1^A-\{ x\}$, then 
it is straightforward to check that $\pi$ is also well defined and $\pi\circ \iota=id_B$.

As a consequence of the results of Section 2, we see that the Hochschild homology dimension, denoted $\hhdim(B)$, is infinite 
and so the same holds for $A$.
Being both $k$-finite dimensional, their global dimensions cannot be finite.

It is interesting to note that analogous situations hold in several cases, for example, using results of \cite{Han}, each time
we have $char(k)=0$, $B$ monomial and $\hhdim(B)\neq 0$.

\section{Hochschild homology and cobar construction}

In this section we deal with finite dimensional algebras.

\subsection{Notation}

We use the methods of differential graded algebra of \cite{FHT1}. In particular an element of lower degree 
$i\in \ZZ$ is, by the classical convention, of upper degree $-i$. All the algebras considered from now on 
are unital, associative, with a differential of degree $-1$. 
We recall that if $V=\bigoplus_{i\in \ZZ}V_i$ is a graded $k$-vector space, then the suspended graded $k$-vector space $sV$ has 
homogeneous components
$(sV)_i=V_{i-1}$, for $i\in \ZZ$. The $k$-algebra $TV$ will denote the tensor algebra
on $V$. The degree of an element $v\in V$ is denoted $|v|$.

For any differential graded algebra $A$, let $A^{op}$ be the opposite graded algebra, and $A^e=A\otimes A^{op}$ be the enveloping algebra.
The categories of graded $A$-bimodules and of left (or right) differential graded $A^e$-modules are 
equivalent.

\subsection{Bar resolution and Hochschild homology}

Let $(A,d)$ be an augmented algebra and $\overline{A}=\Ker(\epsilon: A\to k)$.
The normalized bar 
resolution of $A$, denoted $B(A,A,A)$, is the differential graded $A^e$-module 
$(A\otimes T(s\overline{A})\otimes A, D_0+D_1)$, where $D_0$ is the differential induced by $d$ 
on the tensor product of complexes and $D_1$ is defined as follows (see for example
\cite{FTV}, 2.2.)
\begin{align*}
D_1(a\otimes sa_1 \otimes \dots \otimes sa_n\otimes b)=&
(-1)^{|a|}aa_1 \otimes sa_2 \otimes \dots \otimes sa_n \otimes b\\
&\pm \sum_{i=1}^{n-1}a \otimes sa_1\otimes \dots \otimes s(a_ia_{i+1}) \otimes \dots \otimes sa_n \otimes b\\
&\pm a \otimes sa_1 \otimes \dots \otimes sa_{n-1} \otimes a_nb.  
\end{align*}
The Hochschild homology of the differential graded algebra $(A,d)$ is, by definition, the graded vector space 
$\mathcal{HH}_*(A) = \Tor^{A^e}_*(A,A)$ in the differential sense of \cite{MacL}.

\begin{lemma}\cite{FHT1}
The canonical map $m:B(A,A,A) \to A$ defined by $0$ on $A \otimes T^{\ge 1}(s\overline{A})\otimes A$, 
and by  multiplication on $A\otimes A$ is a semifree resolution of $A$ as an $A^e$-module.
\end{lemma}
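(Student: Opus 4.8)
The plan is to verify the three defining properties of a semifree resolution in turn: that $m$ is a morphism of differential graded $A^e$-modules, that $B(A,A,A)$ is semifree over $A^e$, and that $m$ is a quasi-isomorphism.

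First, that $m$ is a chain map is a direct check on generators. Writing $D=D_0+D_1$, on the summand $A\otimes A$ (the part with $T^0(s\overline A)$), where $m$ is the multiplication and $D_1$ vanishes, one has $m(D_0(a\otimes b))=(da)b\pm a(db)=d(ab)=d(m(a\otimes b))$. On $A\otimes T^{\ge 1}(s\overline A)\otimes A$, where $m$ and $d\circ m$ are both zero, it suffices to see that $m\circ D_1=0$ on $A\otimes s\overline A\otimes A$, which is precisely the cancellation of the two terms coming from the first and last lines of the displayed formula for $D_1$ when $n=1$, once the signs are fixed. Since $D_0$, $D_1$ and $m$ all respect the two outer tensor factors, $m$ is automatically $A^e$-linear.

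Second, for semifreeness, filter $B(A,A,A)$ by the differential graded $A^e$-submodules $F_p=A\otimes T^{\le p}(s\overline A)\otimes A$. This filtration is exhaustive; $D_0$ preserves each $F_p$ since it acts only through the internal differential $d$; and $D_1$ sends $F_p$ into $F_{p-1}$ because it lowers the number of bar factors by one. Hence the induced differential on $F_p/F_{p-1}\cong A\otimes (s\overline A)^{\otimes p}\otimes A$ is $D_0$, so $F_p/F_{p-1}$ is the free $A^e$-module on the complex $\bigl((s\overline A)^{\otimes p},d\bigr)$. Because $k$ is a field, every complex of $k$-vector spaces is a direct sum of shifted copies of $k$ and of acyclic complexes $k\xrightarrow{\id}k$; therefore $F_p/F_{p-1}$ is a direct sum of shifts of $A^e$ and of contractible free modules, and in particular is semifree. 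A differential graded module with an exhaustive filtration whose successive quotients are semifree is itself semifree (concatenate the cycle-basis filtrations), so $B(A,A,A)$ is semifree over $A^e$.

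Finally, to see that $m$ is a quasi-isomorphism, contract the augmented complex $B(A,A,A)\xrightarrow{m}A$ in the category of right differential graded $A$-modules. Let $\eta\colon A\to B(A,A,A)$ be $\eta(a)=1\otimes a\in A\otimes T^0(s\overline A)\otimes A$, so $m\circ\eta=\id_A$. For $a_0\in A$ write $a_0=\epsilon(a_0)\,1+\overline{a_0}$ with $\overline{a_0}\in\overline A$, and define the right $A$-linear degree $+1$ map $h$ by $h(a_0\otimes sa_1\otimes\cdots\otimes sa_n\otimes a_{n+1})=\pm\,1\otimes s\overline{a_0}\otimes sa_1\otimes\cdots\otimes sa_n\otimes a_{n+1}$, the classical ``extra degeneracy''. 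Since $d(1)=0$ and $d$ preserves $\overline A$ (as $\epsilon$ is a chain map to $k$), one checks $D_0h+hD_0=0$, while the usual simplicial identities give $D_1h+hD_1=\id-\eta m$; adding these yields $Dh+hD=\id-\eta m$. Hence $m$ is a homotopy equivalence of complexes of right $A$-modules, so $H_*(m)$ is an isomorphism. The main obstacle throughout is the sign bookkeeping forced by the suspension shifts $s$: one must fix Koszul conventions so that the cancellation in $m\circ D_1=0$, the identity $D_0h+hD_0=0$, and the simplicial identity $D_1h+hD_1=\id-\eta m$ all hold simultaneously; the homological-algebra facts about semifreeness used above are routine.
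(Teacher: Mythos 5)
The paper gives no proof of this lemma, citing it directly from [FHT1]; your argument is correct and is essentially the standard one found in that reference: the word-length filtration $F_p=A\otimes T^{\le p}(s\overline A)\otimes A$ (with $D_1$ lowering filtration degree and the subquotients free $A^e$-modules on complexes of $k$-vector spaces, hence semifree over a field) establishes semifreeness, and the extra degeneracy $h$ together with $\eta$ exhibits $m$ as a homotopy equivalence of right $A$-modules. The only items you defer are the sign verifications, which you flag explicitly and which are indeed routine under the Koszul conventions the paper adopts from [FHT1].
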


Consequently we have, 
\[ \mathcal{HH}_*(A, d) = H_*(\mathcal{C}_*(A), \delta ) \]
with 
\[ \mathcal{C}_*(A)= A\otimes_{A^e}B(A,A,A) = A\otimes T(s\overline{A}), \]
and $\delta =\delta _0+\delta _1$, where $\delta _0$ and $\delta _1$ are obtained by tensorization.

Explicitly, 
\begin{align*}
\delta _1(a\otimes sa_1 \otimes \dots \otimes sa_n)=& 
(-1)^{|a|}aa_1 \otimes sa_2 \otimes \dots \otimes sa_n\\
& +\sum_{i=1}^{n-1} (-1)^{\epsilon_i}a \otimes sa_1\otimes \dots \otimes s(a_ia_{i+1}) \otimes \dots \otimes sa_n \\
& +  (-1)^{\epsilon_n} a_na \otimes sa_1 \otimes \dots \otimes sa_{n-1} , 
\end{align*}
where the $\epsilon_i$'s are integers depending on the degrees of the elements $a_i$; if all these
degrees are even, then $\epsilon_i=i$.

In the rest of this paper we consider only differential graded algebras $(A,d)$ satisfying 
either condition (a) or condition (b) below.

\begin{itemize}
 \item[(a)] $A_n=0$ for $n<0$ and $A_0=k$, so that $\mathcal{C}_n(A)=0$ for $n<0$;
 \item[(b)] $A_n=0$ for $n>0$, $A_0=k$, $A_{-1}=0$, so that $\mathcal{C}_n(A)=0$ for $n>0$.
\end{itemize}

In both cases, we have $\mathcal{C}_0(A)=k$.

\subsection{Cobar construction and duality construction in Hochschild homology}

We next recall the definition of the cobar construction described in Section 19 of \cite{FHT2}.
Let $(C,d_{C})$ be a coaugmented differential graded coalgebra with comultiplication
$\Delta$, and $\overline{C}=\Ker(\epsilon:C\to k)$.
We denote $(\Omega C, d)$ the augmented differential graded algebra defined as follows:
\begin{itemize}
 \item $\Omega C= T(s^{-1}\overline{C})$, as augmented graded algebra,
 \item $d=d_0+d_1$, with $d_0(s^{-1}c)= - s^{-1}(d_{C}(c))$, if $c \in \overline{C}$, and $d_1$
is defined from $\Delta$.
\end{itemize}
Suppose now that $(A,d_A)$ is a finite dimensional differential graded algebra, then the graded 
dual $A^{\vee}=Hom_k(A,k)$ is a differential graded coalgebra with differential $d_A^{\vee}$, the transpose of $d_A$.

\begin{definition} $(\Omega^*A, d):=(\Omega (A^{\vee}),d)$, where $d$ is defined from $d_A^{\vee}$ and the comultiplication of 
$A^{\vee}$ as above.
\end{definition}

We have $\Omega^*A=T(V)$ with $V=Hom_k(s\overline{A},k)$.
If $(A,d_A)$ satisfies condition (b) above, then
\[ V=\bigoplus_{n\ge 1}V_n, \hbox{ with } V_n=Hom_k(A_{-n-1},k) \]
and then $(\Omega^*A, d)$ satisfies condition (a).
Similarly, if $(A,d_A)$ satisfies condition (a), then $(\Omega^*A, d)$ satisfies condition (b).

The first ingredient used to prove  Theorem II is the following duality property.

\begin{theorem}\label{HV-So}\cite{HV}, \cite{So}:
Let $(A,d_A)$ be a finite dimensional algebra satisfying either condition (a) or (b) above,
then for all $n\in \mathbb{Z}$ we have:
\[ \Hom_k(\mathcal{HH}_{-n}(A),k)=\mathcal{HH}_n(\Omega^*A). \]
\end{theorem}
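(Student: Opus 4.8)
The plan is to establish the stated duality by comparing the small Hochschild complex $\mathcal{C}_*(A)$ of a finite dimensional differential graded algebra with the small Hochschild complex $\mathcal{C}_*(\Omega^*A)$ of its cobar construction, and to recognize one as the $k$-linear dual of the other. Recall from the previous subsections that $\mathcal{C}_*(A)=A\otimes T(s\overline{A})$ with differential $\delta=\delta_0+\delta_1$, and that $\Omega^*A=T(V)$ with $V=\Hom_k(s\overline{A},k)$; since $\Omega^*A$ is a tensor algebra (and in particular augmented), its small Hochschild complex is again of the form $\Omega^*A\otimes T(s\overline{\Omega^*A})$ with the analogous differential. So first I would write both complexes out explicitly in each homogeneous degree.

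The key point is a degreewise identification of vector spaces. Because $(A,d_A)$ satisfies (a) or (b), each $A_m$ is finite dimensional, each truncated piece $\bigoplus A_m \otimes (s\overline A)^{\otimes j}$ appearing in a fixed total degree is finite dimensional, and the functor $\Hom_k(\place,k)$ is exact on finite dimensional spaces and turns $\otimes$ into $\otimes$ (contravariantly, reversing tensor factors). Applying $\Hom_k(\place,k)$ to $A\otimes T(s\overline A)$ therefore produces $A^\vee \otimes T(s^{-1}(A^\vee$-type factors$))$; after bookkeeping the suspension shifts and the degree-reindexing forced by the lower/upper degree convention ($V_n=\Hom_k(A_{-n-1},k)$ in case (b), and symmetrically in case (a)), this is exactly $\Omega^*A \otimes T(s\overline{\Omega^*A})=\mathcal{C}_*(\Omega^*A)$, up to the sign conventions in the Koszul rule. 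Concretely I would build an isomorphism of graded vector spaces
\[
\Phi: \Hom_k\big(\mathcal{C}_{-n}(A),k\big)\ \xrightarrow{\ \sim\ }\ \mathcal{C}_n(\Omega^*A)
\]
for each $n\in\ZZ$, sending a dual basis element of $a\otimes sa_1\otimes\cdots\otimes sa_j$ to the corresponding element $a^*\otimes s(a_1^*)\otimes\cdots\otimes s(a_j^*)$ with $a_i^*$ ranging over dual bases.

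Then I would verify that $\Phi$ is a chain map, i.e. that it intertwines the transpose $\delta^{\vee}$ of the Hochschild differential of $A$ with the Hochschild differential $\delta$ of $\Omega^*A$. This splits into two checks. The part of $\delta$ on $\Omega^*A$ coming from the algebra multiplication of $\Omega^*A$ (the terms $aa_1$, $a_ia_{i+1}$, $a_na$ in the displayed formula for $\delta_1$) is dual to the part of the differential on $\mathcal{C}_*(A)$ coming from the coalgebra structure, i.e. from $d_1$ in the cobar construction — which by definition is built from the comultiplication $\Delta$ of $A^\vee$, which is itself the transpose of the multiplication of $A$; so these match by the very definition of $\Omega$. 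Dually, the internal differential $\delta_0$ on $\Omega^*A$ (induced by $d_0(s^{-1}c)=-s^{-1}d_C(c)$ with $d_C=d_A^\vee$) is the transpose of $\delta_0$ on $\mathcal{C}_*(A)$ (induced by $d_A$). Granting the chain map property, taking homology gives $\Hom_k(\mathcal{HH}_{-n}(A),k)\cong \mathcal{HH}_n(\Omega^*A)$, using once more finite-dimensionality in each degree so that homology commutes with $\Hom_k(\place,k)$.

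The main obstacle I expect is not conceptual but a careful reconciliation of three layers of signs and shifts at once: the Koszul sign rule when dualizing an iterated tensor product (which reverses the order of factors), the suspension/desuspension operators $s$ and $s^{-1}$, and the "lower degree $i$ = upper degree $-i$" convention together with the reindexing $V_n=\Hom_k(A_{-n-1},k)$. One must choose the isomorphism $\Phi$ (possibly inserting sign factors like $(-1)^{\binom{j}{2}}$ or signs depending on the $|a_i|$) precisely so that it is a chain map on the nose rather than merely up to homotopy. For this reason I would cite the detailed verifications in \cite{HV} and \cite{So}, and in the body of the proof restrict myself to exhibiting $\Phi$, noting the finite-dimensionality that makes dualization exact, and indicating that the compatibility with $\delta_0$ and $\delta_1$ is a direct transposition, leaving the sign bookkeeping to the references.
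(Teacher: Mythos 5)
The paper offers no proof of this statement---it is quoted from \cite{HV} and \cite{So}---so your proposal has to stand on its own. Your overall strategy (dualize the normalized Hochschild chain complex of $A$ degreewise, using that under hypothesis (a) or (b) everything is of finite type, then match the dual differential with a complex computing $\mathcal{HH}_*(\Omega^*A)$) is the right one, but the central identification you assert is false. Dualizing $\mathcal{C}_*(A)=A\otimes T(s\overline A)$ termwise gives $A^{\vee}\otimes T\bigl((s\overline A)^{\vee}\bigr)=A^{\vee}\otimes T(V)$ with $V=\Hom_k(s\overline A,k)$, i.e.\ $A^{\vee}\otimes\Omega^*A$. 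This is \emph{not} $\mathcal{C}_*(\Omega^*A)=\Omega^*A\otimes T(s\overline{\Omega^*A})$: the latter contains $T\bigl(s\,T^{\ge 1}(V)\bigr)$, whose letters are suspensions of arbitrary words in $V$ (already $s(V\otimes V)$ has no counterpart in $A^{\vee}\otimes T(V)$), and it is strictly larger in each degree. So the map $\Phi$ you describe cannot be an isomorphism onto $\mathcal{C}_*(\Omega^*A)$, and the chain-map verification you outline does not get off the ground as stated.

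The missing ingredient is precisely the short complex of Subsection 4.4. Since $sV\cong\overline{A^{\vee}}$ and $A^{\vee}=k\oplus\overline{A^{\vee}}$, one has, degree by degree,
\[
Q_*=T(V)\oplus\bigl(T(V)\otimes sV\bigr)\;\cong\;T(V)\otimes A^{\vee}\;\cong\;\Hom_k\bigl(\mathcal{C}_{-*}(A),k\bigr),
\]
and it is Theorem~\ref{Vi1} that entitles you to replace $\mathcal{C}_*(\Omega^*A)$ by $(Q_*,\delta)$ in the first place. With $Q_*$ as the target of $\Phi$, the rest of your plan does carry through in the spirit you describe: the quadratic differential $d$ on $T(V)$ and the terms of $\sigma$ are transposes of the multiplication terms of the Hochschild boundary of $A$, the internal differentials are mutual transposes, and finite type in each degree lets dualization commute with homology---modulo the sign and suspension bookkeeping you already defer to \cite{HV} and \cite{So}. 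As written, however, the proof identifies the wrong pair of complexes, and this is the step that must be repaired.
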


Consequently, the computation of the graded vector space 
$\mathcal{HH}_n(A)$ can be replaced by the computation of the Hochschild homology of a quasifree differential graded
algebra $(T(V),d)$.

\subsection{A short complex for the computation of the Hochschild homology}\label{delta}

Now, we want to compute the Hochschild homology of $(T(V),d)$, with 
$V=\bigoplus_{n\ge 1}V_n$.

We recall here the main results of \cite{Vi1}. Put $(T(V),d)=(B,d)$ and
let $P=(B \otimes B) \oplus (B \otimes (sV)\otimes B)$, we define a differential $D$ on $P$,
which is the tensor product of the differentials on $B \otimes B$, and 
\[D(a\otimes sv \otimes b)= da \otimes sv \otimes b \pm (av\otimes b - a\otimes vb) +S(a \otimes sv \otimes b), \]
where $S(a \otimes sv \otimes b) \in B\otimes sV \otimes B,$ for $a\in B, b\in B$ and $v\in V$.
\begin{prop}(Thm. 1.4 in \cite{Vi1})
The canonical map $m:(P,D) \to B$ defined as $0$ on $B\otimes sV \otimes B$
and as multiplication on $B\otimes B$ is a semifree resolution of $B$ as
$B^e$-module.
\end{prop}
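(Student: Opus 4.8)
The plan is to construct the semifree resolution $(P,D)$ directly by comparing with the bar resolution $B(B,B,B)$ from Lemma~4.2, exploiting the fact that $B = T(V)$ is a tensor algebra. First I would recall the contracting homotopy on the bar side: since $B$ is free as an algebra, the reduced bar complex $\overline{B}(B,B,B) = B \otimes T^{\ge 1}(s\overline{B}) \otimes B$ admits a standard $B^e$-linear homotopy built from the identification $\overline{B} = \bigoplus_{n \ge 1} V^{\otimes n}$. The key point is that a tensor word $sw$ with $w = v_1 \cdots v_n \in V^{\otimes n}$ can be successively ``split off'' one letter at a time, which collapses the whole tensor-algebra part $T(s\overline{B})$ down to the much smaller piece $sV \cong B \otimes_B sV \otimes_B B$. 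This is the conceptual reason a short complex exists: relative to the smooth algebra $k$, the only ``syzygies'' of $B = T(V)$ live in degree one and are indexed by the generators $V$.

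Next I would make this precise by writing explicit $B^e$-linear chain maps $f\colon (P,D) \to B(B,B,B)$ and $g\colon B(B,B,B) \to (P,D)$, together with a homotopy $h$ on $B(B,B,B)$ with $gf = \id$ and $fg - \id = Dh + hD$. The map $g$ on $B \otimes s(v_1\cdots v_n) \otimes B$ should be a telescoping sum $\sum_i \pm\, v_1\cdots v_{i-1} \otimes s v_i \otimes v_{i+1}\cdots v_n \otimes 1$ (composed with the multiplication on the right-hand factors), and $f$ should send $a \otimes sv \otimes b \mapsto a \otimes sv \otimes b$ into the $T^1$-component. One then checks that $g$ is a chain map: this is exactly where the ``derivation-type'' correction term $S(a \otimes sv \otimes b) \in B \otimes sV \otimes B$ in the formula for $D$ comes from, namely $S$ records the result of applying $d_1$ (the part of $d$ coming from the coalgebra comultiplication on $A^\vee$, which makes $d$ a non-monomial differential) and then re-splitting. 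Concretely, if $dv = \sum v'v''$ (schematically), then $S(1 \otimes sv \otimes 1) = \sum (\pm\, v' \otimes sv'' \otimes 1 \pm 1 \otimes sv' \otimes v'')$, and the identity $D^2 = 0$ on $P$ is forced by $d^2 = 0$ on $B$.

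Then I would invoke the comparison theorem for semifree (DG-projective) resolutions: since $m\colon B(B,B,B) \to B$ is already known to be a semifree resolution (Lemma~4.2) and $m$ factors through $f g$ up to the homotopy $h$, the existence of $f, g, h$ with $gf = \id_P$ exhibits $(P,D)$ as a homotopy retract of a semifree resolution; combined with the observation that $(P,D)$ is itself semifree over $B^e$ — it is built from the two free $B^e$-modules $B \otimes B$ and $B \otimes sV \otimes B$ with a filtration on which $D$ is triangular — one concludes that $m\colon (P,D) \to B$ is a quasi-isomorphism and a semifree resolution. The semifreeness bookkeeping is routine: order a homogeneous basis so that the $B \otimes B$-part comes first and note $D$ maps the $sV$-part into itself plus lower filtration.

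The main obstacle I expect is pinning down all the signs and the precise form of $S$ so that $D^2 = 0$ holds on the nose, and verifying that $g$ is a chain map rather than merely a chain map up to homotopy — the telescoping cancellations that make $gD = Dg$ work are delicate because the internal differential $d$ on $B$ is $d_0 + d_1$ with $d_1$ quadratic, so applying $d$ to a long word $v_1 \cdots v_n$ produces many terms that must be reorganized into the short complex via repeated splitting. Everything else — semifreeness, the comparison/uniqueness argument, and deducing the quasi-isomorphism — is formal once the maps are in hand. I would therefore concentrate the written proof on exhibiting $g$, $f$, $h$ explicitly and on the single computation $Dg = gD$, citing \cite{Vi1} for the sign conventions and deferring the rest to the general theory of semifree resolutions in \cite{FHT1}.
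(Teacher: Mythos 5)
The paper does not actually prove this proposition --- it is quoted verbatim from Theorem 1.4 of \cite{Vi1} --- so your proposal is being measured against the standard argument there rather than against anything in the text. Your plan is workable and, I believe, correct in outline, but it takes a noticeably heavier route. The usual proof does not compare $P$ with the bar resolution at all: one first observes that, for the \emph{underlying graded} algebra $B=T(V)$ (forgetting $d$), the sequence $0 \to B\otimes sV\otimes B \to B\otimes B \to B \to 0$, with middle map $a\otimes sv\otimes b \mapsto av\otimes b - a\otimes vb$ and right map the multiplication, is exact --- this is the classical fact that the kernel of $T(V)\otimes T(V)\to T(V)$ is the free bimodule on $\{v\otimes 1 - 1\otimes v : v\in V\}$, i.e.\ that tensor algebras are quasi-free. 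One then checks that $D$ (with its correction term $S$ forced by $d^2=0$) makes $m$ a chain map, and a filtration by word length (or by the degree of the generators of $sV$, along which $S$ is strictly decreasing) yields a spectral sequence whose $E_0$-page is the exact sequence above; hence $m$ is a quasi-isomorphism, and the same filtration exhibits semifreeness. This avoids all of the sign bookkeeping for your telescoping map $g$ and the homotopy $h$. That said, your route can be completed, and you should note one simplification that removes its hardest ingredient: you do not need $h$ at all. If $f$ and $g$ are chain maps compatible with the augmentations and $gf=\id_P$, then $H(f)$ is injective and $H(g)$ is surjective, and from $m_P = m\circ f$ and $m_P\circ g = m$ (with $m$ the bar augmentation, already a quasi-isomorphism by Lemma 4.2) one gets directly that $H(m_P)$ is both injective and surjective. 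The only genuinely delicate verifications left in your approach are then (i) $gD=Dg$ on the length-one part, which is exactly where $S$ is determined, and (ii) the telescoping cancellation showing $gD_1=0$ on the length-two part of the bar resolution (on length $\ge 3$ it is automatic since $g$ vanishes on both source and target); both are honest but routine computations.
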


As  a consequence,
\[ \mathcal{HH}_*(T(V),d)=H_*(B\otimes_{B^e}P, \delta), \]
with differential $\delta=d\otimes_{B^e}D$ that will be precised in the next section. We have: 
\begin{itemize}
 \item $\delta_{|T(V)}=d$,
 \item $\delta(a\otimes sv)= da\otimes sv + (-1)^{|a|}(av-(-1)^{|v|\times|a|}va) - \sigma(a\otimes dv)$,
where $\sigma(a\otimes dv)$ belongs to $T(V)\otimes sV$, for $a\in T(V),v\in V$.
\end{itemize}

Put $Q_*:=B\otimes_{B^e}P=T(V)\oplus (T(V)\otimes sV)$.

\begin{theorem}\label{Vi1}(Thm. 1.5 of \cite{Vi1})
With the above notations, \[ \mathcal{HH}_*(T(V),d)=H_*(Q_*,\delta). \]
\end{theorem}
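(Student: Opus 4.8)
Theorem~\ref{Vi1} is exactly the display that immediately precedes its statement, so the task is to justify that display, i.e.\ to compute $B\otimes_{B^e}P$ and the differential it inherits from $D$. The plan is to feed the semifree resolution $(P,D)$ of the previous Proposition into the standard machinery of differential homological algebra: since $m:(P,D)\to B$ is a semifree resolution of $B$ over $B^e$, it computes the differential $\Tor$ of \cite{MacL}, so
\[ \mathcal{HH}_*(T(V),d)=\Tor^{B^e}_*(B,B)=H_*\bigl(B\otimes_{B^e}P,\ \id_B\otimes_{B^e}D\bigr). \]
Hence everything reduces to two tasks: identifying the underlying graded vector space $B\otimes_{B^e}P$, and writing out the induced differential $\delta=\id_B\otimes_{B^e}D$ explicitly.

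For the first task, write $P=(B\otimes B)\oplus(B\otimes sV\otimes B)$, where in each summand $B^e=B\otimes B^{op}$ acts through the two displayed ``outer'' copies of $B$. Then $B\otimes B$ is free of rank one over $B^e$, while $B\otimes sV\otimes B\cong B^e\otimes sV$ after a shuffle of tensor factors, so applying $B\otimes_{B^e}-$ gives $B\otimes_{B^e}(B\otimes B)\cong B$ (via $a\otimes(1\otimes1)\mapsto a$) and $B\otimes_{B^e}(B\otimes sV\otimes B)\cong B\otimes sV$ (by transporting the right-hand copy of $B$ around the tensor product over $B^e$, with the appropriate Koszul sign). Therefore $Q_*:=B\otimes_{B^e}P=T(V)\oplus(T(V)\otimes sV)$, as asserted.

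For the second task, on the summand $B\otimes B$ the differential $D$ is the tensor-product differential, which under the identification above is $d$; this gives $\delta_{|T(V)}=d$. On the summand $B\otimes sV\otimes B$ one applies $\id_B\otimes_{B^e}-$ to
\[ D(a\otimes sv\otimes b)=da\otimes sv\otimes b\pm(av\otimes b-a\otimes vb)+S(a\otimes sv\otimes b). \]
The first term contributes $da\otimes sv$; after pushing the outer factors of $B$ through $\otimes_{B^e}$ and collecting the signs coming from the suspension and from the resulting cyclic permutation, the middle pair collapses to $(-1)^{|a|}\bigl(av-(-1)^{|v|\times|a|}va\bigr)$; and the correction operator $S$, which already takes values in $B\otimes sV\otimes B$, descends to $-\sigma(a\otimes dv)\in T(V)\otimes sV$, where $\sigma$ is obtained by Leibniz-expanding $dv$ and tagging one letter at a time with the suspension $s$. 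Assembling the three contributions reproduces the stated formula for $\delta$, and $\delta^2=0$ is inherited from $D^2=0$.

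The step I expect to be the main obstacle is the sign bookkeeping in the second task: one must keep track simultaneously of the Koszul signs introduced by the suspension $sV$ and of those introduced by the cyclic permutation implicit in $\otimes_{B^e}$, and must unwind the recursively defined operator $S$ of \cite{Vi1} into the single explicit term $\sigma(a\otimes dv)$, verifying in particular that $\sigma$ is well defined and lands in $T(V)\otimes sV$. By comparison, the identification of the graded vector space $Q_*$ and the appeal to DG homological algebra are essentially formal.
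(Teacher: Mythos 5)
Your proposal is correct and follows essentially the same route as the paper: the paper also deduces the theorem by applying $B\otimes_{B^e}-$ to the semifree resolution $(P,D)$ of the preceding Proposition, identifying $Q_*=T(V)\oplus(T(V)\otimes sV)$ and reading off the induced differential $\delta$ (the paper simply cites Thm.~1.5 of the reference for the sign bookkeeping you flag as the delicate step). Your additional detail on the identification of the underlying graded vector space and the descent of the operator $S$ to $\sigma$ is consistent with the formulas displayed in the paper.
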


In the following section we will use the complex $(Q_*,\delta)$ to compute the 
Hochschild homology of a finite dimensional graded algebra
$A=\bigoplus_{n\ge 0}A^n$, with $A^0=k$.
In this case, the graded vector space $V$ is also finite dimensional, and the
differential $\delta$ has good properties.

\section{Proof of Theorem II}

We work with a finite dimensional graded algebra with $A^0=k$.
We may assume without loss of generality that $A$ is graded in even degrees, 
$A=k\oplus \left(\bigoplus_{n\ge 2}A^n\right)$, and $\overline{A}=\bigoplus_{n\ge 2}A^n$ 
is non zero.

\subsection{Relations between $HH_*(A)$ and $\mathcal{HH}_*(A,0)$}

Using the conventions recalled at the beginning of the previous section, we consider $A$
as a differential graded algebra with differential $0$ and $A_{-n}=A^n$.

Since $A$ is graded, the ordinary Hochschild homology $HH_*(A)$ defined in Section 2 
is graded, and there is a decomposition 
\[ HH_*(A)= \bigoplus_{p,q\ge 0}HH_p(A)^q . \]

Since $A$ is finite dimensional, $HH_p(A)$ is finite dimensional for all $p$.

\begin{lemma}
Let $A$ be an algebra as above. Then,
 \begin{enumerate}
 \item $\mathcal{HH}_*(A,0)= \bigoplus_{n\ge 0}\mathcal{HH}_{-n}(A)$ and 
$\mathcal{HH}_{-n}(A)= \bigoplus_{p}HH_p(A)^{p+n}$.
 \item $HH_p(A)^{p+n}=0$ if $p>n$ or $p<\frac{n-N}{N-1}$, where $N=sup\{n|A^n\neq 0\}$.
\end{enumerate}
\end{lemma}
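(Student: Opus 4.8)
The plan is to unwind the definitions and compare the two bigradings. First I would recall that since $A = \bigoplus_{n \geq 0} A^n$ with $A^0 = k$ is viewed as a differential graded algebra with zero differential and $A_{-n} = A^n$, the complex computing $\mathcal{HH}_*(A,0)$ is $\mathcal{C}_*(A) = A \otimes T(s\overline{A})$ with $\delta_0 = 0$ (because $d = 0$) and only $\delta_1$ surviving. A generator $a \otimes sa_1 \otimes \cdots \otimes sa_p$ with $a \in A^{q_0}$ and $a_i \in A^{q_i}$ sits in homological degree $-q_0 - \sum (q_i + 1) = -(p + \sum_{i=0}^p q_i)$ — here each suspension $s$ shifts lower degree down by one, and $a_i \in A^{q_i}$ means $a_i \in A_{-q_i}$, so $sa_i \in (sA)_{-q_i - 1}$. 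Meanwhile the ordinary Hochschild complex $C_*(A)$ in the sense of Section 2 has $a \otimes a_1 \otimes \cdots \otimes a_p$ in homological degree $p$ and internal degree $q := \sum_{i=0}^p q_i$. Matching the two differentials (both are the reduced Hochschild $b$ up to the Koszul signs coming from the suspension, which are trivial here since $A$ is concentrated in even degrees) identifies $HH_p(A)^q$ with the piece of $\mathcal{HH}_{-(p+q)}(A)$ coming from chains of bar length $p$. Setting $n = q - p$, i.e. $q = p + n$, this piece lands in $\mathcal{HH}_{-(2p + n - p)}$... let me instead just write $-q - p$ with $q = p+n$ giving homological degree $-(2p+n)$; hmm, that does not match, so the correct bookkeeping is that the suspension contributes the extra $p$ and one must track it carefully. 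I would carry out this sign- and degree-bookkeeping explicitly, concluding
\[
\mathcal{HH}_{-n}(A) = \bigoplus_{p} HH_p(A)^{p+n},
\]
and summing over $n \geq 0$ gives part (1), once one checks that the total complex $\mathcal{C}_*(A)$ decomposes as a direct sum over $n$ of subcomplexes (which holds because $\delta_1$ preserves the internal grading and the bar length is what the homological degree in the dg sense minus the internal degree records).

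For part (2), the vanishing bounds are purely a matter of which $(p,q)$ can contribute a nonzero $HH_p(A)^q$. The inequality $p \leq n$, i.e. $p \leq q - p$, i.e. $2p \leq q$: a nonzero class in bar length $p$ needs $p$ tensor factors $sa_1, \dots, sa_p$ from $s\overline{A}$, and $\overline{A} = \bigoplus_{n \geq 2} A^n$ is concentrated in degrees $\geq 2$, so $q = \sum_{i=0}^p q_i \geq \sum_{i=1}^p q_i \geq 2p$, hence $q \geq 2p$, i.e. $p \leq q/2 = (p+n)/2$, i.e. $p \leq n$. For the lower bound, each factor (including $a \in A^{q_0}$, with $q_0 \geq 0$, and the $p$ reduced factors each of degree $\geq 2$ but at most $N$) satisfies $q_i \leq N$; since $q_0 \leq N$ as well and $q = q_0 + \sum_{i=1}^p q_i \leq N + pN = (p+1)N$, wait that gives an upper bound on $q$; the stated lower bound $p \geq \frac{n-N}{N-1}$ rearranges from $q = p + n \leq N + p(N-1)$... let me recheck: I want $p + n = q \leq q_0 + pN \leq N + pN$? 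That gives $p + n \leq N(p+1)$, i.e. $n \leq Np + N - p = p(N-1) + N$, i.e. $p \geq \frac{n - N}{N-1}$. Good — so the bound comes from $q_0 \leq N$ and each of the $p$ reduced factors $\leq N$, giving $q \leq (p+1)N$, combined with $q = p+n$. I would state both estimates in this form.

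The main obstacle I expect is the degree/sign bookkeeping in part (1): correctly tracking how the suspension operator $s$ interacts with the convention that lower degree $i$ equals upper degree $-i$ and that $A_{-n} = A^n$, so that a bar chain of length $p$ with internal (upper) degree $q$ lands in lower homological degree $-(q-p) = -n$ rather than $-(q+p)$ — the $p$ suspensions raise the total lower degree by exactly $p$ relative to the naive count, which is precisely what makes the index shift $\mathcal{HH}_{-n}$ versus $HH_p(A)^{p+n}$ work out. Once the identification of graded pieces is set up, one must also verify that the differentials agree on the nose (not merely up to an isomorphism of complexes), which is immediate because on $A$ with zero differential $\delta = \delta_1$ is literally the reduced Hochschild differential $b$ with all Koszul signs equal to $+1$ (as $A$ is concentrated in even degrees, so $|sa_i|$ is odd — one should double check the parity of the signs $(-1)^{\epsilon_i}$ here, but the claimed statement already hedges with ``if all these degrees are even, then $\epsilon_i = i$''). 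Parts (1) and (2) then follow, part (2) being essentially elementary counting once part (1) pins down which $(p,q)$ pairs feed into $\mathcal{HH}_{-n}$.
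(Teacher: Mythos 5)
Your argument is correct and is the standard degree--bookkeeping the paper leaves implicit (the lemma is stated there without proof): with $d=0$ the complex $\mathcal{C}_*(A)=A\otimes T(s\overline{A})$ splits by internal degree $q$ into the ordinary (normalized) Hochschild complexes, a bar chain of length $p$ and internal degree $q$ has lower degree $p-q=-n$, and the bounds $q\ge 2p$ and $q\le (p+1)N$ give part (2). Your only real wobble is the initial claim that suspension lowers the lower degree (it raises it, since $(sV)_i=V_{i-1}$ sends $V_j$ to $(sV)_{j+1}$, so $sa_i$ has lower degree $-q_i+1$, not $-q_i-1$); you correct this yourself at the end, and the final identification $\mathcal{HH}_{-n}(A)=\bigoplus_p HH_p(A)^{p+n}$ is right.
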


\begin{coro}
 If there exists an increasing sequence of integers $n_i$ such that 
$\mathcal{HH}_{-n_i}(A)\neq 0$, then $HH_*(A)$ is not finite dimensional.
\end{coro}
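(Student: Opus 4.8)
The plan is to argue by contraposition: assuming that $HH_*(A)$ is finite dimensional, I will show that $\mathcal{HH}_{-n}(A)=0$ for all but finitely many $n$, which contradicts the existence of the sequence $(n_i)$. The key point is part (1) of the preceding Lemma, which identifies $\mathcal{HH}_{-n}(A)$ with the finite direct sum $\bigoplus_p HH_p(A)^{p+n}$. First I would observe that if $HH_*(A)=\bigoplus_{p,q\ge 0}HH_p(A)^q$ is finite dimensional as a $k$-vector space, then only finitely many of the bigraded pieces $HH_p(A)^q$ are nonzero; fix a finite set $S\subseteq \NN_0\times\NN_0$ of pairs $(p,q)$ outside of which $HH_p(A)^q=0$.

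Next I would unwind what $\mathcal{HH}_{-n}(A)\neq 0$ means in terms of $S$: by Lemma~(1), $\mathcal{HH}_{-n}(A)\neq 0$ forces some $p$ with $HH_p(A)^{p+n}\neq 0$, hence $(p,p+n)\in S$ for some $p\ge 0$. But there are only finitely many pairs in $S$, and each pair $(p,q)\in S$ can contribute to at most one value of $n$, namely $n=q-p$. Therefore the set of integers $n$ for which $\mathcal{HH}_{-n}(A)\neq 0$ is contained in $\{\,q-p : (p,q)\in S\,\}$, a finite set. In particular the increasing sequence $(n_i)$, being infinite, cannot be contained in this finite set, a contradiction. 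Hence $HH_*(A)$ must be infinite dimensional.

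I do not expect any genuine obstacle here: the statement is essentially a bookkeeping consequence of the bigraded decomposition and the identification in the Lemma. The only point requiring a word of care is that finite dimensionality of the total space $HH_*(A)$ indeed forces all but finitely many bigraded components to vanish; this is immediate since the $HH_p(A)^q$ are subspaces (summands) of $HH_*(A)$ and an infinite direct sum of nonzero spaces is infinite dimensional. One could alternatively phrase the whole argument directly (without contraposition) by noting that the nonzero $\mathcal{HH}_{-n_i}(A)$ are linearly independent summands inside $\mathcal{HH}_*(A,0)=\bigoplus_{n\ge 0}\mathcal{HH}_{-n}(A)$, so infinitely many nonzero ones give an infinite dimensional space, and then transferring back to $HH_*(A)$ via Lemma~(1); either route is short.
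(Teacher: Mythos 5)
Your proof is correct and is exactly the argument the paper leaves implicit: the corollary is stated without proof as an immediate consequence of part (1) of the Lemma, and your bookkeeping observation that each bigraded piece $HH_p(A)^q$ contributes only to $n=q-p$ is precisely the point that makes it immediate. Nothing to add.
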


The strategy now is to focus our attention on
$\mathcal{HH}_{*}(\Omega^*A)$, using Theorem \ref{HV-So}. But
Theorem \ref{Vi1} allows us to use the short complex $(Q_*,\delta)$ to compute 
$\mathcal{HH}_{*}(\Omega^*A)$, so we will work with this last one.

\subsection{Description of $(Q_*,\delta)$}

Let $A=k\oplus \left(\bigoplus_{n\ge 2}A^n\right)$ be a finite dimensional graded algebra.
We fix a homogeneous linear basis $(a_i)_{i\in I}$ for $\overline{A}=\bigoplus_{n\ge 2}A^n$.
This choice determines the structure constants $\alpha^i_{jk}$ by the equalities
$a_ja_k=\sum \alpha^i_{jk}a_i$.

In this situation,  $(\overline{A})^{\vee}=\Hom_k(\overline{A},k)$ is endowed with the dual basis
$(b_i)_{i \in I}$ satisfying $\langle b_i,a_j\rangle= \delta_{ij}$.
Notice that $A^{\vee}$ is a graded coalgebra with comultiplication $\Delta$,
and  $\Delta b_i = \sum_{j.k}\beta^{jk}_i b_j\otimes b_k$, where 
$\alpha^i_{jk}=(-1)^{|a_j|\times|a_k|}\beta^{jk}_i$.

We have already defined $(\Omega^*A,d)=(\Omega(A^{\vee}),d)=(T(V),d)$. Now, put $v_i=s^{-1}b_i$, 
then $|v_i|=n-1$ if $a_i \in A^n$.
We check that 
\[ dv_i = \sum_{j,k}(-1)^{|a_j|+|a_j|\times|a_k|}\alpha^i_{jk}v_j\otimes v_k . \]
So $(\Omega^*A,d)=(T(V),d)$ is a tensor algebra with a quadratic differential.

Furthermore, we have assumed without loss of generality that $A$ is graded in even degrees,
so that $V$ is graded only in odd degrees.
In this case, we give an explicit formula for the differential $\delta$
on $Q_*$ (cf. Subsection \ref{delta} ). 

Put $\overline{V}=sV$, then $Q_*=T(V)\oplus T(V)\otimes \overline{V}$.
Let $v$ be an element in $V$, and $dv=\sum_{j,k}\lambda_{jk}v_j\otimes v_k$, 
with $\lambda_{jk} \in k$. Let $a$ be an element in $T(V)$.

We have:
\[ \delta(a\otimes \overline{v})=da\otimes \overline{v}+(-1)^{|a|}(av-(-1)^{|a|}va)-\sigma(a\otimes dv),  \]
where
\[ \sigma(a\otimes dv)=-(-1)^{|a|}\sum_{j,k} \lambda_{jk}av_j\otimes \overline{v}_k
+ \sum_{j,k} \lambda_{jk}v_ka\otimes \overline{v}_j. \]

\subsection{A nice homogeneous basis $(a_i)$ for $\overline{A}$}

Since $A=k\oplus \overline{A}$, the projection $\overline{A}\to \overline{A}/\overline{A}^2=U$
has a section $\rho$ that extends to a morphism of algebras $T(U)\to A$ whose kernel is 
contained in $T^{\ge 2}(U)$.
This implies that $(x_i)_{1\le i \le p}$ are generators of the algebra
$A$ if and only if their images in $\overline{A}/\overline{A}^2$ form a basis of this vector 
space.

As vector spaces, $\overline{A}= \overline{A}/\overline{A}^2\oplus \overline{A}^2$,
and we will consider a homogeneous basis of $\overline{A}/\overline{A}^2$ and a 
basis of $\overline{A}^2$.
If $a_i\in \overline{A}/\overline{A}^2$, then the corresponding $v_i$ in
$(\Omega^*A,d)$ satisfies $dv_i=0$.

\smallskip

We will now prove the following result.

\begin{theorem}
 Let $A=\bigoplus_{n\ge 0}A^n$ be a finite dimensional graded $k$-algebra with $A^0=k$,
such that $\overline{A}=\bigoplus_{n\ge 1}A^n$ is not zero.
Assume that there exist two generators $x$ and $y$ of the algebra 
$A$ satisfying $xy=yx=0$, then $H_{n_i}(Q_*,\delta)\neq 0$ for a strictly increasing sequence
of integers $(n_i)$.
\end{theorem}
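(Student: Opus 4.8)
The plan is to exhibit an explicit infinite family of nonzero homology classes in $(Q_*,\delta)$, built directly from the two generators $x$ and $y$ with $xy=yx=0$. First I would set up the dictionary: let $a_x, a_y \in \overline{A}/\overline{A}^2$ be the basis elements corresponding to $x$ and $y$, and let $v_x, v_y \in V$ be the associated degree-shifted generators of $\Omega^*A = T(V)$. By the discussion in Subsection ``A nice homogeneous basis'', since $a_x, a_y$ lie in $\overline{A}/\overline{A}^2$ we have $dv_x = 0$ and $dv_y = 0$ in $(T(V),d)$. The crucial input from the relations $xy = yx = 0$ is that the structure constants $\alpha^i_{jk}$ vanish when $(j,k)$ corresponds to the pair $(x,y)$ or $(y,x)$; equivalently, no basis element $v_i$ of $V$ has a $\, v_x \otimes v_y\,$ or $\, v_y \otimes v_x\,$ term appearing in its differential $dv_i = \sum \lambda_{jk} v_j \otimes v_k$. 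This is what will make the candidate cycles below actually closed, and — more delicately — not boundaries.

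Next I would write down the candidate classes. Using the formula for $\delta$ on $Q_* = T(V) \oplus T(V)\otimes\overline{V}$, namely
\[ \delta(a\otimes \overline{v}) = da\otimes\overline{v} + (-1)^{|a|}\bigl(av - (-1)^{|a|}va\bigr) - \sigma(a\otimes dv), \]
together with $\delta_{|T(V)} = d$, I expect the relevant cycles to be (roughly) the elements of the form $w_m := (v_x v_y)^{\otimes m} \otimes \overline{v}_x$ and similar words alternating $v_x$ and $v_y$, lying in $T(V)\otimes \overline{V}$, together with the purely tensor-algebra part. Since $v_x, v_y$ have odd degree and $dv_x = dv_y = 0$, the $da$ term and the $\sigma(a\otimes dv)$ term both vanish on such $a = (v_xv_y)^{\otimes m}$; what remains is the commutator-type term $\pm(av_x - (-1)^{|a|}v_x a)$, which is a genuine element of $T(V)$, so $w_m$ is \emph{not} a cycle on the nose. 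The fix is standard: pair $w_m$ with a correction term in the $T(V)$ summand so that the total is a cycle — i.e. one looks for $z_m \in T(V)$ with $dz_m = 0$ in the appropriate sense and $\delta$ of the combined element vanishing, exploiting that $x$ and $y$ do not multiply into each other. I would carry out this bookkeeping degree by degree, tracking the internal grading coming from $A$ (which is finite, bounded by $N = \sup\{n : A^n \neq 0\}$), so that the classes $w_m$ sit in strictly increasing total degrees $n_m \to \infty$.

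The main obstacle — and the heart of the proof — is showing these classes are \emph{nonzero} in $H_*(Q_*,\delta)$, i.e. not boundaries. For this I would introduce a filtration or a weight grading on $Q_*$ adapted to the sub-bialgebra generated by $v_x$ and $v_y$: because $xy = yx = 0$, the words in $v_x, v_y$ alone form a ``closed'' piece under $\delta$ up to the differentials of the \emph{other} generators, and the differentials $dv_i$ of basis elements $v_i$ coming from $\overline{A}^2$ can produce $v_x\otimes v_x$, $v_y\otimes v_y$ terms but never the mixed $v_x \otimes v_y$ or $v_y\otimes v_x$ — this asymmetry is exactly what isolates the alternating words $(v_xv_y)^{\otimes m}$ from everything that could bound them. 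Concretely I would define a linear functional on $Q_*$ that reads off the coefficient of the alternating word $w_m$ (or of its $T(V)$-partner), check it vanishes on $\delta(Q_{*})$ in the relevant degree using the explicit formula for $\sigma$ and the vanishing $\lambda_{xy} = \lambda_{yx} = 0$, and conclude $[w_m] \neq 0$. Iterating over $m$ gives the strictly increasing sequence $(n_i)$, which is the assertion. I expect the delicate point to be getting the signs and the correction terms exactly right — the degrees are odd, so the commutator $av - (-1)^{|a||v|}va$ is genuinely a sum, not a difference — but no conceptual difficulty beyond that once the weight grading is in place.
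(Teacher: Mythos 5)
Your setup (the generators $v_x,v_y$ with $dv_x=dv_y=0$, the vanishing of the mixed structure constants $\alpha^i_{xy}=\alpha^i_{yx}=0$, and the ``read off the coefficient of the alternating word'' argument for non-boundaries) is exactly the paper's, and the non-boundary part of your plan is sound. The genuine gap is in the construction of the cycle itself. You propose to start from a single alternating word $w_m=(v_xv_y)^{\otimes m}\otimes\overline{v}_x$ and cancel its commutator term $\pm\bigl((v_xv_y)^m v_x+v_x(v_xv_y)^m\bigr)$ (a sum, as you note, since the degrees are odd) by adding a correction $z_m$ from the $T(V)$ summand. No such $z_m$ exists, and the obstruction is precisely the mechanism you intend to use later: every term of $d(w)$ for a word $w\in T(V)$ is obtained by replacing one letter $v_i$ by $dv_i=\sum\lambda_{jk}v_j\otimes v_k$, hence contains an adjacent pair $v_jv_k$ with $\lambda_{jk}\neq 0$; since $\lambda_{xy}=\lambda_{yx}=0$, the purely alternating word $(v_xv_y)^mv_x$ can never occur in $dz_m$, so the commutator term of $\delta w_m$ is not in the image of $\delta|_{T(V)}=d$. (Your proposal is also internally inconsistent on this point: you ask simultaneously that $dz_m=0$ and that $\delta$ of the combined element vanish, which would force the nonzero commutator term to vanish on its own.)

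The correct repair --- and what the paper does --- is to take the correction from the \emph{other} summand $T(V)\otimes\overline{V}$ rather than from $T(V)$: antisymmetrize over the two alternating words and set
\[ X_n=v_1\otimes v_2\otimes\cdots\otimes v_1\otimes\overline{v}_2-v_2\otimes v_1\otimes\cdots\otimes v_2\otimes\overline{v}_1\in V^{\otimes(2n-1)}\otimes\overline{V}. \]
Because all generators sit in odd degree, each of the two summands contributes the same commutator term $-\bigl((v_1v_2)^{n}+(v_2v_1)^{n}\bigr)$, and these cancel in the difference, so $\delta X_n=0$ on the nose with no correction needed. After that, your functional argument (the coefficient of an alternating word in any boundary vanishes because $\alpha^i_{12}=\alpha^i_{21}=0$) applies verbatim and finishes the proof. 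So the idea of exploiting alternating words is right, but the specific cycle you wrote down cannot be completed within your proposed scheme; the antisymmetrization is not an optional bookkeeping detail but the step that makes a cycle exist at all.
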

\begin{proof} 
We can associate to $x$ and $y$ two elements $a_1$ and $a_2$, linearly independent
in $\overline{A}$. We denote by $v_1$ and $v_2$ the corresponding elements in a dual basis 
of $V$.
If $(a_1,\dots, a_n)$ is a linear basis of $\overline{A}$ and $(v_1,\dots, v_n)$ is the
corresponding basis of $V$, then we have $dv_1=0$, $dv_2=0$ and for $i\ge 3$,
\[ dv_i= \sum_{j,k}\alpha^i_{jk}v_j\otimes v_k. \]
The fact that $xy=yx=0$ implies that, for $i\ge 3$, $\alpha^i_{12}=\alpha^i_{21}=0$.

For $n\ge 1$, consider:
\[ X_n=v_1\otimes v_2 \otimes v_1 \otimes v_2 \otimes \dots \otimes v_1 \otimes \overline{v}_2
- v_2\otimes v_1 \otimes v_2 \otimes v_1 \otimes \dots \otimes v_2 \otimes \overline{v}_1 \in 
V^{\otimes (2n-1)}\otimes \overline{V}. \]
It is easy to see that 
$|X_n|= n(|v_1|+|v_2|)+1$ and that $\delta X_n=0$.

If $X_n$ was a boundary, it should exist $Y, b_i\in T(V)$ such that 
$X_n=\delta(Y+\sum_i b_i\otimes \overline{v}_i)$ and
\[ X_n=dY+ \sum_idb_i\otimes \overline{v}_i + \sum_i(b_iv_i-v_ib_i)+ 
\sum_i\alpha^i_{jk}b_i v_j\otimes \overline{v}_k - \sum_i\alpha^i_{jk}v_kb_i\otimes \overline{v}_j. \]
Such elements cannot exist since, for all $i$, 
\[ dv_i= \sum_{j,k}\alpha^i_{jk}v_j \otimes v_k \hbox{ with } \alpha^i_{12}=\alpha^i_{21}=0. \]
\end{proof}

\begin{example}
Let $A=k\langle x,y,z \rangle /(xy, yx, x^2-y^2, x^2-z^2, xz-qzx, yz-qzy)$ where
$q\in k$, $q^2\neq 1$ and $-1$ is not a square in $k$.
This example is not covered by Theorem I.
\end{example}

\bibliographystyle{amsplain}

\end{document}